\tikzstyle{legend_general}=[rectangle, rounded corners, thin,
\DeclareMathOperator{\ch}{\chi_{\ell}}
\newtheorem{theorem}{Theorem}[section]
\newtheorem{lemma}[theorem]{Lemma}
\theoremstyle{definition}
\author{Peter Bradshaw}
\address{Simon Fraser University, Burnaby, BC, Canada}
\email{pabradsh@sfu.ca}
\title{Separating the online and offline DP-chromatic numbers}
\begin{document}
\maketitle
\begin{abstract}
The DP-coloring problem is a generalization of the list-coloring problem in which the goal is to find an independent transversal in a certain topological cover of a graph $G$. In the online DP-coloring problem, the cover of $G$ is revealed one component at a time, and the independent transversal of the cover must be constructed in parts based on incomplete information. Kim, Kostochka, Li, and Zhu asked whether the chromatic numbers corresponding to these two graph coloring problems can have an arbitrarily large difference in a single graph. We answer this question in the affirmative by constructing graphs for which the gap between the online DP-chromatic number and the offline DP-chromatic number is arbitrarily large.
\end{abstract}

\section{Introduction}
We will consider several graph coloring problems.
In the \emph{list coloring problem}, we have a graph $G$ and a list $L(v)$ of colors at each vertex $v \in V(G)$. In this setting, we say that an \emph{$L$-coloring} of $G$ is a proper coloring $\varphi:V(G) \rightarrow \bigcup_{v \in V(G)} L(v)$ of $G$ in which $\varphi(v) \in L(v)$ for every vertex $v \in V(G)$. If $G$ always has an $L$-coloring whenever $|L(v)| = f(v)$ for each vertex $v \in V(G)$, then we say that $G$ is \emph{$f$-choosable}. If $f$ is a constant function $f(v) = k$, then we say that the \emph{list-chromatic number} (or \emph{choosability}) of $G$ is at most $k$, and we write $\ch(G) \leq k$.

The \emph{DP-coloring problem} is a generalization of the list coloring problem introduced by Dvo\v{r}\'ak and Postle \cite{DP}, defined as follows.
Given a graph $G$ and a function $f:V(G) \rightarrow \mathbb N$, an \emph{$f$-fold cover} of $G$ is a graph $H$ obtained by the following process:
\begin{itemize}
\item For each vertex $v \in V(G)$, add a clique $K_{f(v)}$ to $H$, and write $L(v)$ for the vertex set of this clique.
\item For each edge $uv \in E(G)$, add a matching between $L(u)$ and $L(v)$.
\end{itemize}
Then, we say that an independent set in $H$ of size $|V(G)|$ is a \emph{DP-coloring} of $G$ with respect to $H$. If $G$ always has a DP-coloring for every $f$-fold cover $H$ of $G$, then we say that $G$ is DP-$f$-colorable. 
If $f$ is a constant function $f(v) = k$, then we say that $H$ is a \emph{$k$-fold} cover of $G$, and if $G$ always has a DP-coloring for every $k$-fold cover $H$ of $G$, then we say that the DP-chromatic number of $G$ is at most $k$, and we write $\chi_{DP}(G) \leq k$. 
Given a cover $H$ of $G$, we often refer to the vertices of $H$ as \emph{colors}, and if $c \in L(v)$ for a vertex $v \in V(G)$, then we say that the color $c$ is \emph{above} $v$. Note that when $f(v) = k$ is a constant function, if the cliques in $H$ corresponding to vertices in $G$ are replaced with independent sets, and if each matching between sets $L(u)$ and $L(v)$ is a perfect matching, then $H$ is a $k$-sheeted covering space of $G$, and a DP-coloring of $G$ is equivalent to an independent transversal of the fibers in $H$ above the vertices of $G$ (see \cite{Hatcher} for an introduction to graphs as topological spaces).

Every list-coloring problem can be transformed into a DP-coloring problem as follows. Given a graph $G$ with a color list $L'(v)$ at every vertex $v \in V(G)$, we construct a cover $H$ of $G$ by adding a clique with vertex set $L(v)$ for every vertex $v \in V(G)$, with elements of $L(v)$ corresponding to colors in $L'(v)$. Then, we consider each edge $uv \in E(G)$, and we add an edge in $H$ between each pair $(c,c') \in L(u) \times L(v)$ for which $c$ and $c'$ both correspond to a common color from $L'(u) \cap L'(v)$. When $H$ is constructed this way, a DP-coloring of $G$ with respect to $H$ is equivalent to an $L'$-coloring of $G$. Therefore, it holds that $\ch(G) \leq \chi_{DP}(G)$.

We will also consider two online graph coloring problems. The \emph{online DP-coloring} problem takes place in the form of a DP-coloring game between two players, called Lister and Painter. The game is played on a graph $G$ with a function $f:V(G) \rightarrow \mathbb N$. At the beginning of the game, each vertex $v \in V(G)$ has $f(v)$ tokens. On each turn $i$, Lister removes some number $m_i(v)$ (possibly zero) of tokens from each vertex $v \in V(G)$ and then reveals a clique $K_{m_i(v)}$ above each vertex $v$. Furthermore, for each edge $uv \in E(G)$, Lister reveals a matching between the revealed cliques above $u$ and $v$. The cliques and matchings revealed on this turn $i$ form a cover $H_i$ of $G$. After $H_i$ is revealed, Painter chooses an independent set from the vertices of $H_i$. The game ends when $G$ has no more tokens for Lister to remove. Painter wins the game if she manages to choose at least one color above each vertex of $G$ before the game is over; otherwise, Lister wins. If Painter always has a winning strategy in the DP-coloring game on a graph $G$ when each vertex $v \in V(G)$ begins with $k$ tokens, then we say that the \emph{online DP-chromatic number} (or \emph{DP-paintability}) of $G$ is at most $k$, and we write $\chi_{DPP}(G) \leq k$. Observe that if each vertex of $G$ begins with $k$ tokens, then Lister has the option of revealing a $k$-fold cover $H$ of $G$ on the first turn and asking Painter to find a DP-coloring of $G$ with respect to $H$, and therefore $\chi_{DP}(G) \leq \chi_{DPP}(G)$.

If, in the DP-coloring game, Lister is only allowed to remove at most one token from each vertex of $G$ during each turn and must always reveals edges wherever possible, then we call this variant of the game the \emph{list-coloring game}. For the list-coloring game, we may equivalently imagine that on each turn $i$, Lister reveals a single color $c_i$ above each vertex of some induced subgraph $G'_i$ of $G$, and Painter must choose some independent set $I_i$ of $G'_i$ and color each vertex in $I_i$ with $c_i$. 
In this equivalent setting, each vertex $v \in V(G)$ still begins with $f(v)$ tokens, and a single token is removed from $v$ whenever Lister reveals a color above $v$.
In this setting, Painter wins the game if and only if she can color every vertex of $G$ before the game ends.
 If Painter always has a strategy to win the list-coloring game on a graph $G$ when each vertex $v \in V(G)$ begins with $f(v)$ tokens, then we say that $G$ is \emph{$f$-paintable}. If $f$ is a constant function $f(v) = k$, then we say that $G$ is \emph{$k$-paintable}, and we write $\chi_P(G) \leq k$. The online list-coloring game was originally invented using this framework of revealing colors above vertices independently by Schauz \cite{Schauz} and Zhu \cite{ZhuP}.

At the end of the list-coloring game on $G$ with a constant function $f(v) = k$, the colors revealed at each vertex $v$ form a set $L(v)$ of $k$ colors, and if Painter wins the game, then Painter completes a proper $L$-coloring of $G$. Since Lister is free to form any list assignment $L$ on the vertices of $G$, it follows that if $G$ is $k$-paintable, then $G$ is also $k$-choosable, and hence $\chi_{\ell}(G) \leq \chi_P(G)$. Also, since the online list-coloring game is at least as difficult for Lister as the DP-coloring game, it also follows that $\chi_P(G) \leq \chi_{DPP}(G)$.

After putting all of the inequalities between these parameters together, we obtain two inequality chains:
\begin{eqnarray*}
 \chi_{\ell}(G)  \leq  & \chi_{DP}(G)  & \leq  \chi_{DPP}(G); \\
 \chi_{\ell}(G)  \leq &  \chi_{P}(G)  & \leq  \chi_{DPP}(G) .
\end{eqnarray*}
Given these inequality chains, it is natural to ask whether the differences between adjacent parameters can be arbitrarily large. 
For three out of these four differences, we find an affirmative answer by letting $G = K_{n,n}$. 
Indeed, Bernshteyn \cite{Bernshteyn} showed that a graph $G$ of average degree $d$ satisfies $\chi_{DP}(G) = \Omega \left ( \frac{d}{\log d} \right )$, implying that $\chi_{DP}(K_{n,n})  =  \Omega \left (\frac{n}{\log n} \right )$. 
Since it is known that $\chi_{\ell}(K_{n,n}) \leq \chi_P(K_{n,n}) = \log_2 n + O(1)$ \cite{Duraj}, this shows that
\[\chi_{DP}(K_{n,n}) - \chi_{\ell}(K_{n,n}) = \Omega \left ( \frac{n}{\log n} \right )\textrm{ \indent and \indent} \chi_{DPP}(K_{n,n}) - \chi_{P}(K_{n,n}) = \Omega \left ( \frac{n}{\log n} \right ).\]
 Duraj, Gutowski, and Kozik \cite{Duraj} also showed that  \[\chi_{P}(K_{n,n}) - \chi_{\ell}(K_{n,n}) = \Omega(\log \log n).\] 
Therefore, by letting $G = K_{n,n}$, we achieve an arbitrarily large difference for each adjacent pair of parameters except for $\chi_{DPP}(G) - \chi_{DP}(G)$. For this final difference, Kim, Kostochka, Li, and Zhu \cite{KKLZ} showed there exist graphs $G$ for which $\chi_{DPP}(G) - \chi_{DP}(G) \geq \chi_{P}(G) - \chi_{DP}(G) \geq 1$, implying that this final difference can be positive. However, it has not been shown that this difference can be arbitrarily large. 

In this note, we will show that the difference $\chi_{DPP}(G) - \chi_{DP}(G)$ can also be arbitrarily large, answering a question of Kim, Kostochka, Li, and Zhu \cite{KKLZ}. Rather than choosing $G = K_{n,n}$, we will construct a graph $G_t$ for each $t \geq 1$ that satisfies $\chi_{DPP}(G_t) - \chi_{DP}(G_t) \geq t$. We construct our graphs $G_t$ by generalizing an idea from the original paper of Kim, Kostochka, Li, and Zhu \cite{KKLZ}. Our graphs $G_t$ will also satisfy the additional property that $\chi_{P}(G_t) - \chi_{DP}(G_t) \geq t$.
By combining this equality with the already-known estimate $\chi_{DP}(K_{n,n}) - \chi_P(K_{n,n}) = \Omega \left ( \frac{n}{\log n} \right )$, we hence see that the difference $\chi_{DP}(G) - \chi_{P}(G)$ can achieve both positive and negative values of arbitrarily large magnitude.

\section{The construction}
For each integer $t \geq 1$, we will construct a graph $G_t$ that satisfies $\chi_P(G_t) - \chi_{DP}(G_t) \geq t$. Since $\chi_{DPP}(G) \geq \chi_P(G)$ for all graphs $G$, our graphs $G_t$ will also satisfy
 $\chi_{DPP}(G_t) - \chi_{DP}(G_t) \geq t$.
Our construction is based on a generalization of an idea of Kim, Kostochka, Li, and Zhu~\cite{KKLZ}.

As we are concerned with showing that the paintability of each graph $G_t$ is large enough, we begin with an observation about the online list-coloring game.
If Lister and Painter play the online list-coloring game on a graph $G$ with some initial token assignment, then Lister wins if and only if he can reach a position in which
each uncolored vertex $v \in V(G)$ has some $g(v)$ remaining tokens, and the uncolored subgraph of $G$ is not $g$-choosable.
In the original paper of Kim, Kostochka, Li, and Zhu~\cite{KKLZ}, the authors take advantage of this idea in order to construct a graph $G$ satisfying $\chi_P(G) \geq \chi_{DP}(G) + 1$.
In order to show that their graph $G$ has a large enough paintability, these authors show that in the online list-coloring game on their graph $G$, Lister always has a strategy to create an uncolored $K_{1,k}$ subgraph of $G$ in which each leaf $\ell$ has $g(\ell) = 1$ token and the center vertex $v$ has $g(v) = k$ tokens. Since $K_{1,k}$ is not $g$-choosable, it follows that Lister has a strategy to win the online list-coloring game on their graph $G$.

In our construction, we will use a similar idea. 
We first fix the value $k = 2^{8t^3}$. (With more careful calculation, our proof would work with a smaller value of $k$, but we use this larger value for clearer presentation.)
In each of our graphs $G_t$, we will show that Lister can always create an uncolored $K_{t,k^t}$ subgraph in which each $t$-degree vertex $u$ has $g(u) = t$ tokens and each $k^t$-degree vertex $v$ has $g(v) = k$ tokens.
The following lemma shows that if Lister manages to create such a subgraph of $G_t$, then Lister wins the online list-coloring game.
\begin{lemma}
\label{lem:ktkt}
Given the function $g: V(K_{t,k^t}) \rightarrow \mathbb N$ defined above, $K_{t,k^t}$ is not $g$-choosable. 
\end{lemma}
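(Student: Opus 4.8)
The plan is to exhibit a single bad list assignment $L$ with $|L(v)| = g(v)$ for every vertex $v$ and argue that it admits no proper $L$-coloring. Write $A$ for the side of $K_{t,k^t}$ of size $t$, whose vertices have degree $k^t$ and hence receive $g = k$ tokens, and write $B$ for the side of size $k^t$, whose vertices have degree $t$ and hence receive $g = t$ tokens. Since the bipartite graph is complete, in any proper $L$-coloring every vertex of $B$ must avoid all colors used on $A$. Thus the obstruction I want to engineer is a list assignment in which, no matter how $A$ is colored, some vertex of $B$ has its entire list contained in the set of colors already used on $A$.

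First I would make the lists on $A$ pairwise disjoint: label the vertices of $A$ as $a_1, \dots, a_t$ and set $L(a_i) = C_i$, where $C_1, \dots, C_t$ are disjoint color sets each of size $k$. The key structural observation is then a counting identity built into the parameters of the problem: a proper coloring of $A$ is exactly a choice of one color from each $C_i$, that is, a transversal in $C_1 \times \cdots \times C_t$, and the number of such transversals is precisely $|C_1| \cdots |C_t| = k^t = |B|$. Because the $C_i$ are disjoint, each transversal consists of $t$ \emph{distinct} colors.

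Next I would exploit this bijection to define the lists on $B$. Index the vertices of $B$ by the transversals, so that to each tuple $(c_1,\dots,c_t) \in C_1 \times \cdots \times C_t$ there corresponds a distinct vertex $b_{(c_1,\dots,c_t)} \in B$, and set $L(b_{(c_1,\dots,c_t)}) = \{c_1,\dots,c_t\}$. This list has size exactly $t$, matching $g$. Now suppose toward a contradiction that a proper $L$-coloring exists. Coloring $A$ selects some $c_i \in C_i$ for each $i$, producing a transversal $(c_1,\dots,c_t)$; consider the vertex $b_{(c_1,\dots,c_t)}$. Every color in its list $\{c_1,\dots,c_t\}$ already appears on some adjacent vertex of $A$, so it has no available color, a contradiction.

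There is essentially no hard calculation here; the entire content is the design of the list assignment, and the one point to get exactly right is the identity $|C_1 \times \cdots \times C_t| = k^t = |B|$, which is precisely what lets each possible coloring of $A$ be blocked by its own dedicated vertex of $B$. I expect no genuine obstacle beyond verifying that the disjointness of the $C_i$ forces every $B$-list to have the correct size $t$. Notably, the argument uses nothing about the specific value $k = 2^{8t^3}$, so that choice must be needed elsewhere in the construction rather than in this lemma.
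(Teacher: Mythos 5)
Your proof is correct and is essentially identical to the paper's: the paper also assigns pairwise disjoint lists of size $k$ to the $t$ degree-$k^t$ vertices and gives each of the $k^t$ degree-$t$ vertices one of the $k^t$ transversals of $L(v_1) \times \dots \times L(v_t)$ as its list, so that every coloring of the small side blocks its dedicated vertex on the large side. Your closing remark is also accurate --- the lemma uses nothing about $k = 2^{8t^3}$; that value is only needed in the counting argument of Lemma \ref{lem:yes}.
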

\begin{proof}
We let the $t$ vertices $v_1, \dots, v_t$ of degree $k^t$ have pairwise disjoint color lists $L(v_1), \dots, L(v_t)$ of size $k$. Then, for each of the $k^t$ elements $L \in L(v_1) \times \dots \times L(v_t)$, we let $L$ be the list of a vertex $u$ of degree $t$. Then, for any coloring of $v_1, \dots, v_t$ using colors from their lists, some vertex $u$ of degree $t$ has no available color in its list, and hence $K_{t,k^t}$ is not $g$-choosable.
\end{proof}

The most important piece of our construction of $G_t$ will be the following gadget $H_t$. We construct our gadget $H_t$ along with a function $h:V(H_t) \rightarrow \mathbb N$ as follows. We let $H_t$ contain $(t+1)k^t$ copies $K^1, \dots, K^{(t+1)k^t}$ of the clique $K_{t+1}$, and we write $u^{\ell}_1, \dots, u^{\ell}_{t+1}$ for the vertices of each clique $K^{\ell}$. We write $U$ for the set of all of these vertices of the form $u_j^{\ell}$; in other words, we let $U$ consist of all vertices that we have introduced so far. Then, for each value $1 \leq j \leq t+1$, we add $t$ independent vertices $x_j^1,\dots,x_j^t$, and we make each of these vertices adjacent to $u^1_j, u^2_j,\dots,u^{(t+1)k^t}_j$. We write $X$ for the set consisting of all of these vertices of the form $x_j^i$. For each vertex $u_j^{\ell} \in U$, we let $h(u_j^{\ell}) = t+1$, and for each vertex $x_j^i \in X$, we let $h(x_j^i) = k - t + 1$. We now prove two lemmas that show that under appropriate circumstances, winning the online list-coloring game on $H_t$ as Painter is much harder than finding a DP-coloring on $H_t$.

\begin{lemma}
\label{lem:no}
$H_t$ is not $(h+t-1)$-paintable. 
\end{lemma}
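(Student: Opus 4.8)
The plan is to translate the claim into a statement about Lister's winning strategy. Since each vertex of $U$ starts with $h(u_j^{\ell}) + (t-1) = 2t$ tokens and each vertex of $X$ starts with $h(x_j^i) + (t-1) = k$ tokens, I must show that Lister wins the online list-coloring game on $H_t$ under this initial token assignment. By the characterization recalled just before Lemma~\ref{lem:ktkt}, it suffices for Lister to force a position whose uncolored subgraph, equipped with the remaining token counts $g$, is not $g$-choosable.

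Lister's strategy will never reveal a color above any vertex of $X$, so every vertex of $X$ retains all $k$ of its tokens throughout. Instead, Lister plays $t$ rounds; in round $r$ (for $1 \le r \le t$) he reveals a single fresh color $c_r$ above every currently uncolored vertex of $U$. Since the vertices of a clique $K^{\ell}$ are pairwise adjacent, Painter's chosen independent set meets each clique in at most one vertex, so Painter colors at most one vertex of each clique per round, and hence at most $t$ vertices of each clique over all $t$ rounds. As each clique $K^{\ell}$ has $t+1$ vertices, at least one vertex of $K^{\ell}$ remains uncolored at the end; call it a \emph{survivor}. A survivor was uncolored throughout, hence was revealed in every round, so it lost exactly $t$ tokens and now carries $2t - t = t$ tokens.

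Next I would apply a pigeonhole argument across the $(t+1)k^t$ cliques. Choosing one survivor from each clique yields $(t+1)k^t$ survivors, each of some \emph{type} $j \in \{1,\dots,t+1\}$ according to which index $u_j^{\ell}$ it is; hence some type $j$ occurs at least $\frac{(t+1)k^t}{t+1} = k^t$ times. Fix such a $j$ and select $k^t$ survivors $u_j^{\ell_1}, \dots, u_j^{\ell_{k^t}}$ of type $j$, which lie in distinct cliques. Each such vertex is adjacent to all of $x_j^1, \dots, x_j^t$, while distinct cliques share no edges within $U$ and the vertices of $X$ are independent; thus the subgraph of $H_t$ induced on $\{x_j^1,\dots,x_j^t\} \cup \{u_j^{\ell_1},\dots,u_j^{\ell_{k^t}}\}$ is exactly $K_{t,k^t}$. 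In this copy, the degree-$k^t$ vertices $x_j^i$ have $k$ remaining tokens and the degree-$t$ vertices $u_j^{\ell_m}$ have $t$ remaining tokens, which is precisely the token function $g$ of Lemma~\ref{lem:ktkt}.

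Finally, I would upgrade this bad copy of $K_{t,k^t}$ to a non-choosability statement about the entire uncolored subgraph $G'$. Assign to the vertices of this copy the list assignment witnessing Lemma~\ref{lem:ktkt}, and assign arbitrary lists of the correct sizes to the remaining uncolored vertices; any proper coloring of $G'$ from these lists would restrict to a proper list-coloring of the induced $K_{t,k^t}$, which Lemma~\ref{lem:ktkt} forbids. Hence $G'$ is not $g$-choosable, and by the winning-position characterization Lister wins, so $H_t$ is not $(h+t-1)$-paintable. The crux of the argument — and the step I would be most careful about — is the double counting: the clique structure caps Painter at one color per clique per round, guaranteeing one reduced survivor per clique, and only then does the pigeonhole over the $t+1$ types force $k^t$ survivors sharing a common neighborhood $\{x_j^1,\dots,x_j^t\}$ in $X$, which is exactly what Lemma~\ref{lem:ktkt} needs.
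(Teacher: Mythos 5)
Your proposal is correct and follows essentially the same strategy as the paper's proof: Lister spends $t$ rounds revealing colors over the clique vertices, each clique $K^{\ell}$ retains an uncolored survivor with exactly $t$ tokens, pigeonhole over the $t+1$ positions yields $k^t$ survivors of a common type $j$, and Lemma~\ref{lem:ktkt} applied to the resulting uncolored $K_{t,k^t}$ (together with the winning-position characterization) finishes the argument. Your explicit final step---extending the bad list assignment on the $K_{t,k^t}$ by arbitrary lists to conclude that the whole uncolored subgraph is not $g$-choosable---is a small elaboration that the paper leaves implicit, but it is the same argument.
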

\begin{proof}
We give each vertex $v \in V(H_t)$ exactly $h(v) + t - 1$ tokens, and we show that Painter cannot win the list-coloring game on $H_t$.

On each of the first $t$ turns, Lister reveals a color at each vertex of each clique $K^{\ell}$ and reveals an edge wherever possible. After these first $t$ turns, each clique $K^{\ell}$ must have an uncolored vertex $u_j^{\ell}$ with exactly $h(u_j^{\ell}) - 1 = t$ tokens. Furthermore, since we have $(t+1)k^t$ cliques $K^{\ell}$, each with at least one uncolored vertex, there must exist some value $1 \leq j^* \leq t+1$ for which at least $k^t$ vertices of the form $u_{j^*}^{\ell}$ are uncolored. However, the $k^t$ vertices of the form  $u_{j^*}^{\ell}$ along with the $t$ vertices $x^1_{j^*}, \dots, x^t_{j^*}$ form an uncolored $K_{t,k^t}$ subgraph in which each $t$-degree vertex $v$ has only $g(v) = t$ remaining tokens, and each $k^t$-degree vertex $v$ has only $g(v) = k$ remaining tokens. Since Lemma \ref{lem:ktkt} shows that this $K_{t,k^t}$ subgraph is not $g$-choosable, Lister has a strategy to win the game.
\end{proof}

\begin{lemma}
\label{lem:yes}
$H_t$ is DP-$h$-colorable.
\end{lemma}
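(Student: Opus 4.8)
The plan is to exploit the offline advantage: since we may inspect the entire cover before choosing any colors, I will first color all of $X$ and only afterwards color the cliques $K^1,\dots,K^{(t+1)k^t}$. The point of this order is that the cliques are pairwise vertex-disjoint and share no edges with one another—every interaction between two distinct cliques is routed through $X$—so once the colors on $X$ are fixed, coloring each clique becomes an independent subproblem. Concretely, fixing a color for each $x_j^i$ deletes, from the palette of each neighbor $u_j^{\ell}$, the single color matched to that choice, leaving a residual palette $L'(u_j^{\ell})$; it then suffices to show that for a suitable choice of colors on $X$, every clique $K^{\ell}$ admits a DP-coloring from its residual palettes.

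First I would isolate the relevant fact about a single clique. Call $(t+1)-|L'(u_j^{\ell})|$ the \emph{loss} at $u_j^{\ell}$, so that the loss equals the number of colors deleted by $X$. I claim that $K^{\ell}$ is colorable from its residual palettes whenever the total loss $\sum_{j=1}^{t+1}\big((t+1)-|L'(u_j^{\ell})|\big)$ is at most $t$. To see this, order the $t+1$ vertices of $K^{\ell}$ by increasing residual palette size and color greedily. When the $i$-th vertex is reached it has $i-1$ earlier clique-neighbors, each forbidding at most one color, so a color remains as long as its residual palette has size at least $i$, i.e. as long as its loss is at most $t+1-i$. If some vertex in this order had loss exceeding $t+1-i$, then the $i$ vertices up to it would each have loss at least $t+2-i$, forcing total loss at least $i(t+2-i)\ge t+1$ (the quantity $i(t+2-i)$ is minimized at the endpoints $i=1$ and $i=t+1$ over $1\le i\le t+1$), contradicting total loss $\le t$. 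Hence greedy succeeds. I expect this lemma to be the main obstacle: the naive condition ``at most one deficient vertex'' is far too weak to survive the union bound below, and it is precisely the bound ``failure forces total loss $\ge t+1$'' that lets the $k^t$ cliques be absorbed.

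With this in hand I would color $X$ at random, choosing each $x_j^i$ independently and uniformly from its palette of size $k-t+1$. Because $|L(u_j^{\ell})|=t+1$ is tiny compared with $k-t+1$, the matching between $x_j^i$ and $u_j^{\ell}$ touches at most $t+1$ of the $k-t+1$ colors of $x_j^i$, so the probability that a given $x_j^i$ deletes any color at a given $u_j^{\ell}$ is at most $(t+1)/(k-t+1)$. These deletion events (one for each of the $t(t+1)$ pairs $(j,i)$ inside a fixed clique) are independent, since they depend on disjoint vertices of $X$, and by the lemma a clique fails only if at least $t+1$ of them occur. A union bound over which $t+1$ events occur gives
\[
\Pr[K^{\ell}\text{ fails}]\ \le\ \binom{t(t+1)}{t+1}\left(\frac{t+1}{k-t+1}\right)^{t+1}\ \le\ \left(\frac{4t^{3}}{k-t+1}\right)^{t+1}.
\]
Finally I would union bound over the $(t+1)k^{t}$ cliques, obtaining at most $(t+1)k^{t}\,(8t^{3}/k)^{t+1}=(t+1)(8t^{3})^{t+1}/k$, which is below $1$ because $k=2^{8t^{3}}$ dwarfs $(t+1)(8t^{3})^{t+1}$. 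Hence some coloring of $X$ leaves every clique with total loss at most $t$; coloring each clique by the greedy procedure above—legally, since the cliques do not interact once $X$ is fixed—produces a DP-coloring of $H_t$, proving that $H_t$ is DP-$h$-colorable.
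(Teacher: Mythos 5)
Your proof is correct, and it replaces the heart of the paper's argument with a genuinely different technique. Both proofs share the same skeleton: fix the colors on $X$ first, ensure that each clique $K^{\ell}$ loses at most $t$ colors in total, and then color each clique greedily after ordering its vertices by increasing residual palette size (your bound $i(t+2-i) \geq t+1$ makes explicit the ordering claim that the paper simply asserts). The difference lies in how the good coloring of $X$ is found. The paper colors $X$ greedily and deterministically, maintaining the nested invariant $(\star)$ that the intersection of any $q$ of the chosen index sets $S_c$ has size less than $k^{t - qt/(t+1)}$; this hierarchy of bounds is needed precisely because a one-vertex-at-a-time greedy argument must control how much each new choice can inflate intersections at every level $q$. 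You instead color $X$ uniformly at random and observe that, within a fixed clique, the $t(t+1)$ deletion events are independent (they depend on disjoint vertices of $X$) and each has probability at most $(t+1)/(k-t+1)$, so a clique accumulates total loss at least $t+1$ with probability at most $\binom{t(t+1)}{t+1}\bigl(\tfrac{t+1}{k-t+1}\bigr)^{t+1}$; a union bound over the $(t+1)k^t$ cliques then wins because the exponent $t+1$ beats the $k^t$ cliques by a factor of $k = 2^{8t^3}$, which dwarfs the polynomial-in-$t$ factors. (Your arithmetic checks out, including the edge case $t=1$, where the failure bound is $2 \cdot 8^2/2^8 = 1/2 < 1$.) What each approach buys: yours is shorter, avoids the intersection hierarchy entirely, and exposes more clearly where the slack in $k$ is used; the paper's is deterministic and constructive, producing the coloring of $X$ by an explicit polynomial-time greedy procedure rather than by an existence argument.
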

\begin{proof}
Consider an $h$-fold cover $H'$ of $H_t$. Recall that given a vertex $v \in V(H_t)$, we say that $v$ corresponds to a clique $K_{h(v)}$ in $H'$ with a vertex set $L(v)$, and we say that $L(v)$ contains $h(v)$ \emph{colors}. Using this terminology, we say that each color in $H'$ appears above only one vertex of $H_t$, as the sets $L(v)$ forming the cliques of $H'$ are pairwise disjoint.

We will first color the vertices $x^i_j \in X$. For each vertex $x^i_j \in X$ and color $c \in L(x^i_j)$, we assign a set $S_c \subseteq [(t+1)k^t]$ that consists of those indices $\ell$ for which $L(u_j^{\ell})$ contains a color adjacent to $c$. We would like to color the $t(t+1)$ vertices of $X$ using $t(t+1)$ colors $c_1, \dots, c_{t(t+1)}$ that correspond to a family $\mathcal S =\{ S_{c_1}, \dots, S_{c_{t(t+1)}}\}$ such that for each value $1 \leq q \leq t+1$, the following property holds:
\begin{equation}\tag{$\star$}
\label{star}
\textrm{The intersection of any $q$ sets of $\mathcal S$ contains at most $k^{t  - \frac{qt}{t+1}} - 1$ elements.}
\end{equation}
In particular, the intersection of any $t+1$ sets of $\mathcal S$ is empty.

We show that we may greedily color each vertex $x_j^i \in X$ while satisfying (\ref{star}). Suppose we wish to color some vertex $x \in X$ and that we have already colored some subset $Y \subseteq X$ while satisfying (\ref{star}). For each subset $A \subseteq Y$ of size $q \in [0,t]$ whose vertices are colored with colors $a_1, \dots, a_q$, we must choose some color $c \in L(x)$ for which 
\begin{equation}\tag{$\bullet$}
\label{eqn:int}
|S_c \cap S_{a_1} \cap \dots \cap S_{a_q}| \leq k^{t - \frac{(q+1)t}{t+1}} - 1.
\end{equation}
Note that since $h(u^{\ell}_j) = t+1$ for each vertex $ u^{\ell}_j \in U$, each value $\ell \in [(t+1)k^t]$ appears in at most $t+1$ sets $S_c$ for colors $c \in L(x)$. Furthermore, since $|S_{a_1} \cap \dots \cap S_{a_q}| < k^{t - \frac{qt}{t+1}}$ whenever $q \geq 1$, the number of colors $c \in L(x)$ that do not satisfy (\ref{eqn:int}) for a given $A \subseteq X$ is at most 
\[\frac{(t+1)k^{t - \frac{qt}{t+1}} }{k^{t - \frac{(q+1)t}{t+1}}} = (t+1)k^{\frac{t}{t+1}}.\]
Furthermore, since fewer than $2^{t(t+1)}$ subsets $A \subseteq Y$ can be chosen, the number of colors $c \in L(x)$ that would cause (\ref{star}) to be violated is less than
\begin{eqnarray*}
2^{t(t+1)}(t+1) k^{\frac{t}{t+1}} &= & 2^{t(t+1) + \frac{8t^4}{t+1}}(t+1) \\
 &=&2^{t(t+1) + 8(t^3 - t^2 + t - 1 + \frac{1}{t+1} )}(t+1) \\
 &< & 2^{8t^3 - t} \\
 & < & k - t + 1 =  h(x).
\end{eqnarray*}
Therefore, some color $c \in L(x)$ can be used to color $x$ without violating (\ref{star}).

Now, with every vertex $x_j^i \in X$ colored, and with (\ref{star}) satisfied, no index $\ell$ belongs to the intersection of more than $t$ sets $S_c$, where $c$ is a color used at a vertex $x_j^i$, and hence after coloring the vertices in $X$, at most $t$ colors are unavailable at each clique $K^{\ell}$. Therefore, the vertices of each clique $K^{\ell}$ can be ordered so that the first vertex has at least one available color, the second vertex has at least two available colors, and so forth, until the last vertex has $t+1$ available colors. Therefore, each remaining clique $K^{\ell}$ can be $DP$-colored with its available colors, and the lemma is proven.
\end{proof}

Now, we construct our graph $G_t$. First, we make $k^{k-2t}$ copies of the graph $H_t$, and we index them by the $(k-2t)$-tuples in $[k]^{k-2t}$. We also add $k-2t$ vertices $y_1, \dots, y_{k-2t}$ that are adjacent to all vertices of $U$ in each copy of $H_t$. We write $\tilde{U}$ for this set of neighbors of $y_1, \dots, y_{k-2t}$, that is, the set of vertices belonging to a set $U$ in some copy of $H_t$. The following two theorems show that $\chi_P(G_t) - \chi_{DP}(G_t) \geq t$.

\begin{theorem}
$\chi_{DP}(G_t) \leq k-t+1$.
\end{theorem}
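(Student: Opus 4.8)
The plan is to reduce the statement to Lemma \ref{lem:yes} by first disposing of the vertices $y_1,\dots,y_{k-2t}$ and then treating each copy of $H_t$ in isolation. Fix an arbitrary $(k-t+1)$-fold cover $H'$ of $G_t$. Since the only neighbours of the vertices $y_i$ lie in $\tilde U$, these vertices form an independent set, so I would begin by assigning to each $y_i$ an arbitrary color from its list $L(y_i)$; because the $y_i$ are pairwise non-adjacent, no conflict among them can arise.

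Next I would account for how many colors this initial choice forbids at the vertices of $\tilde U$. Each $u \in \tilde U$ is adjacent to all $k-2t$ of the vertices $y_i$, and coloring a single $y_i$ removes at most one color from $L(u)$, namely the one matched to the chosen color across the edge $y_i u$. Hence at most $k-2t$ colors are forbidden at $u$, and since $|L(u)| = k-t+1$, at least $(k-t+1)-(k-2t) = t+1 = h(u)$ colors remain available. By contrast, each $x^i_j \in X$ is non-adjacent to every $y_i$, so its full list of $k-t+1 = h(x^i_j)$ colors survives untouched. This is the decisive numerical balance, and the point I expect to require the most care: the parameter $h$ is tuned exactly so that the $k-2t$ vertices $y_i$ can pare each $U$-vertex down to precisely $h(u)=t+1$ available colors while leaving the $X$-vertices at $h(x)=k-t+1$.

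Finally, after deleting the $y_i$, the graph $G_t$ decomposes into the $k^{k-2t}$ pairwise vertex-disjoint copies of $H_t$, so I can color each copy independently. Within a single copy I would restrict the residual list at each $U$-vertex to exactly $t+1$ of its available colors; together with the (still full) residual lists at the $X$-vertices and the inherited matchings, this data forms an $h$-fold cover of $H_t$ (a subclique is still a clique and a sub-matching is still a matching). Lemma \ref{lem:yes} then furnishes a DP-coloring of that copy within its residual lists, and every such choice is automatically consistent with the colors already placed at the $y_i$, since the forbidden colors were removed in advance. Assembling the colorings of all copies with the coloring of the $y_i$ produces a DP-coloring of $G_t$, giving $\chi_{DP}(G_t) \leq k-t+1$. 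Beyond the color-count at the $U$-vertices and the observation that the copies become disconnected once the $y_i$ are removed, no genuine obstacle should remain, as all the real combinatorial work is already carried by Lemma \ref{lem:yes}.
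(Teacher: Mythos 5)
Your proposal is correct and follows essentially the same route as the paper's proof: color the vertices $y_1,\dots,y_{k-2t}$ arbitrarily, note that each vertex of $\tilde U$ loses at most $k-2t$ colors so every vertex $v$ in a copy of $H_t$ retains at least $h(v)$ available colors, and then apply Lemma \ref{lem:yes} to each copy. Your write-up is in fact slightly more careful than the paper's, spelling out that the copies of $H_t$ become disjoint after the $y_i$ are removed and that the residual lists (restricted to exactly $h(v)$ colors) form an $h$-fold cover, but these are routine details rather than a different argument.
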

\begin{proof}
We may give $G_t$ a DP-coloring with lists of size $(k-t+1)$ as follows. First, we arbitrarily color the vertices $y_1, \dots, y_{k-2t}$. Next, we observe that 
the vertices in $\tilde U$ have lost at most $k-2t$ available colors, so
for each vertex $v$ in a copy of $H_t$, $v$ has at least $h(v)$ available colors remaining. Therefore, by Lemma \ref{lem:yes}, we may finish our DP-coloring of $G_t$ by giving each remaining copy of $H_t$ a DP-$h$-coloring. 
\end{proof}

\begin{theorem}
\label{thm:no}
$\chi_P(G_t) > k$. 
\end{theorem}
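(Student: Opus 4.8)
The plan is to show that when every vertex of $G_t$ begins with $k$ tokens, Lister has a winning strategy in the online list-coloring game, which gives $\chi_P(G_t) > k$. The target position I would aim for is one in which some single copy of $H_t$ has all of its $U$-vertices uncolored with exactly $2t$ remaining tokens and all of its $X$-vertices uncolored with exactly $k$ remaining tokens. Since $h(u_j^\ell) + t - 1 = 2t$ and $h(x_j^i) + t - 1 = k$, this is precisely the token assignment $h + t - 1$, so once Lister reaches such a position he may restrict all further moves to that copy and win by Lemma~\ref{lem:no}. Because the vertices of $X$ are not adjacent to any $y_i$, Lister can leave them untouched and they automatically retain all $k$ tokens; the entire difficulty is to drive the $U$-vertices of \emph{some} copy from $k$ tokens down to $2t$ while keeping them uncolored.

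The key idea is to use the $k-2t$ vertices $y_1, \dots, y_{k-2t}$ as selectors, spending one of them for each token we must remove from the $U$-vertices. I would process them in $k-2t$ sequential phases, where phase $i$ handles $y_i$ and forces every relevant $U$-vertex to lose exactly one token. In phase $i$, Lister reveals fresh colors on $y_i$, one per round, using a palette disjoint from all other phases: in round $r$ he reveals a new color simultaneously on $y_i$ and on every $U$-vertex lying in a copy whose $i$-th index equals $r$. Since $y_i$ is adjacent to all of $\tilde U$, and since leaving $y_i$ uncolored through all $k$ rounds would end the game with $y_i$ uncolored and holding $0$ tokens, Painter is forced to color $y_i$ in some round $r_i \le k$. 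In that round the color placed on $y_i$ blocks every $U$-vertex of the copies with $i$-th index $r_i$: each such vertex loses a token for the reveal but cannot be colored, because it is adjacent to $y_i$ and $y_i$ occupies the only revealed color. The disjoint palettes guarantee no $U$-vertex ever sees a repeated color.

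After all $k-2t$ phases, I would consider the copy of $H_t$ indexed by the tuple $(r_1, \dots, r_{k-2t})$, where $r_i$ is the round in which Painter colored $y_i$. In phase $i$ this copy has $i$-th index equal to $r_i$, so a color was revealed on its $U$-vertices exactly in round $r_i$ and they stayed uncolored; hence each of its $U$-vertices loses exactly one token per phase and remains uncolored, ending with $k - (k-2t) = 2t$ tokens, while its $X$-vertices are untouched with $k$ tokens. This is exactly the target position, so Lemma~\ref{lem:no} completes the argument. The point that makes the strategy work is that the copies are indexed by \emph{all} of $[k]^{k-2t}$: whatever response sequence $(r_1, \dots, r_{k-2t})$ Painter produces, the matching copy is present, so Lister never needs to predict Painter's choices in advance.

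The step I expect to be the main obstacle, and the one deserving the most care in the write-up, is verifying that the $U$-vertices of the target copy genuinely remain uncolored throughout. This rests on two facts that must be checked against the precise rules of the game: first, that in round $r_i$ the independent set Painter colors, which contains $y_i$, cannot also contain any $U$-vertex adjacent to $y_i$, so none of the target copy's $U$-vertices can be colored there; and second, that in every other round of phase $i$ no color is revealed on the target copy's $U$-vertices at all, since its $i$-th index is $r_i$ rather than the current round's value, so they can be neither colored nor drained of extra tokens. One must also confirm the bookkeeping that each such vertex loses exactly one token in each of the $k-2t$ phases, so that the final count is exactly $2t$ and not smaller.
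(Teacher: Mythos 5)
Your proposal is correct and follows essentially the same strategy as the paper's proof: the paper's turns $(i,j)$ are exactly your phase-$j$, round-$i$ reveals on $y_j$ together with the $U$-vertices of copies whose $j$-th coordinate is $i$, forcing Painter to spend a turn coloring each $y_j$ and thereby leaving the copy indexed by Painter's responses $(r_1,\dots,r_{k-2t})$ uncolored with token assignment $h+t-1$, at which point Lemma~\ref{lem:no} finishes the game. The only cosmetic difference is that you allow a phase to stop early once $y_j$ is colored, which changes nothing in the analysis.
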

\begin{proof}
Suppose that the online list-coloring game is played on $G_t$ with $k$ tokens at each vertex. We will show that Lister has a winning strategy.
For each pair $(i,j)$ satisfying $1 \leq i \leq k$
and
 $1 \leq j \leq k-2t$, Lister executes the following command. When Lister executes the command for a given pair $(i,j)$, we say that this takes place on \emph{turn $(i,j)$}.
\begin{quote}
 Reveal a color $c_{i,j}$ above $y_j$ and above each vertex of $\tilde{U}$ that belongs to a copy of $H_t$ indexed by a $(k-2t)$-tuple 
 with the value $i$ in the $j$th coordinate.
\end{quote}
For each value $j \in [k-2t]$, we write $L(y_j) = \{c_{1,j}, \dots, c_{k,j}\}$ for the set of colors revealed above $y_j$.

For each $j \in [k-2t]$, 
we may assume that for some value $i_{j} \in [k]$, Painter colors $y_{j}$ 
during turn $(i_{j}, j)$
and hence does not color any vertex of $\tilde U$ during turn $(i_j, j)$. 
Indeed, if this is not the case, then $y_j$ is never colored, and Painter will not have another chance to color $y_{j}$.
Therefore, 
for each value $j \in [k-2t]$, we may assume that 
no vertex in a copy of $H_t$ indexed by a $(k-2t)$-tuple with an $i_j$ entry in the $j$th coordinate is colored
using a color in $L(y_j)$.

Now, let $H$ be the copy of $H_t$ indexed by the $(k-2t)$-tuple $(i_1, \dots, i_{k-2t})$. By our observation above, no vertex of $H$ has been colored by a color in $L(y_1) \cup \dots \cup L(y_{k-2t})$, and hence no vertex of $H$ has been colored. 
Furthermore, 
since $k-2t$ tokens have been removed from each vertex in $\tilde{U} \cap V(H)$, it follows that 
for each vertex $v \in V(H)$, only $h(v)+t-1$ tokens remain at $v$. Therefore, Lister can follow the strategy in Lemma \ref{lem:no} on $H$ in order to win the game, and thus $\chi_P(G_t) > k$.
\end{proof}

\section{Conclusion}
While we have shown for each $t \geq 1$ the existence of a graph $G_t$ for which $\chi_P(G_t) - \chi_{DP}(G_t) \geq t$, it is still open whether there exists a sequence $\{G_t\}_{t \geq 1}$ of graphs for which 
\[\lim_{t \rightarrow \infty} \frac{\chi_{DPP}(G_t)}{\chi_{DP}(G_t)} > 1 \textrm{ \indent or \indent } \lim_{t \rightarrow \infty} \frac{\chi_{P}(G_t)}{\chi_{\ell}(G_t)} > 1.\] 
On the other hand, it is unknown whether $\chi_P(G)$ can be bounded above by a linear or even polynomial function of $\chi_{\ell}(G)$, and it is unknown whether $\chi_{DPP}(G)$ can be bounded above by a linear function of $\chi_{DP}(G)$. Duraj, Gutowski, and Kozik \cite{Duraj} have pointed out that currently, the best known bound for $\chi_P(G)$ in terms of $\chi_{\ell}(G)$ comes from
the
relationship between a graph's choosability and minimum degree. 
Namely, a
result of Saxton and Thomason \cite{Saxton} states that a graph $G$ of minimum degree $\delta$ satisfies $\chi_{\ell}(G) \geq (1 + o(1)) \log_2 \delta$. Writing $d$ for the degeneracy of a graph $G$, we observe that $G$ must have a subgraph of minimum degree $d$, and hence we may use this result to observe that
\[\chi_P(G) \leq d+1 \leq   2^{(1+o(1))\chi_{\ell} (G)}.\]
For $\chi_{DPP}(G)$,
we may use a result of 
of Bernshteyn showing that a graph $G$ of minimum degree $\delta$ satisfies $\chi_{DP}(G) \geq \frac{\delta / 2}{\log (\delta / 2)}$ in order to bound
$\chi_{DPP}(G)$ in terms of $\chi_{DP}(G)$ in a similar way.
Using the same observation as above, if $d$ is the degeneracy of $G$, then 
\[\chi_{DPP}(G) \leq d + 1\leq (2 + o(1)) \chi_{DP}(G)  \log \chi_{DP}(G).\]
 It is likely that a deeper understanding of these coloring parameters is necessary to determine tight bounds between them.

\section{Acknowledgment}
I am grateful to Alexandr Kostochka for offering helpful feedback on an earlier version of this paper.

\raggedright
\bibliographystyle{plain}
\bibliography{CCbib}

\end{document}